\theoremstyle{plain}
\newtheorem{thm}{Theorem}[section]
\theoremstyle{definition}
\newtheorem{df}{Definition}[section]
\newtheorem{rem}{Remark}[section]
\newtheorem{ex}{Example}[section]
\newcommand{\FF}{\mathbb{F}}
\newcommand{\ZZ}{\mathbb{Z}}
\newcommand{\NN}{\mathbb{N}}
\DeclareMathOperator{\wt}{wt}
\begin{document}

\title{{On the cycle index and the weight enumerator}
\footnote{This work was supported by JSPS KAKENHI (15K04775, 17K05164).}
}
\author{
Tsuyoshi Miezaki
\thanks{Faculty of Education, University of the Ryukyus, Okinawa  
903--0213, Japan 
miezaki@edu.u-ryukyu.ac.jp
(Corresponding author)
}
and
Manabu Oura
\thanks{Graduate School of Natural Science and Technology, 
Kanazawa University,  
Ishikawa 920--1192, Japan 
oura@se.kanazawa-u.ac.jp
telephone: +81-76-264-5635, Fax: +81-76-264-6065 
}
}

\date{}

\maketitle

\begin{abstract}
In this paper, we introduce the 
concept of the complete cycle index and 
discuss a relation with the 
complete weight enumerator 
in coding theory. 
This work was motivated by 
Cameron's lecture notes 
``Polynomial aspects of codes, matroids and permutation groups.'' 
\end{abstract}


\noindent
{\small\bfseries Key Words and Phrases.}
Cycle index, Complete weight enumerator.\\ \vspace{-0.15in}

\noindent
2010 {\it Mathematics Subject Classification}.
Primary 11T71;
Secondary 20B05, 11H71.\\ \quad


\section{Introduction}
In \cite{CameronPaper,Cameron}, 
a relationship between 
the cycle index and the weight enumerator was given. 
To state our results, 
we review this relationship.

Let $G$ be a permutation group on a set $\Omega$, 
where $|\Omega| = n$. 
For each element
$h \in G$, we can decompose the permutation $h$ into a product 
of disjoint cycles; let
$c_i(h)$ be the number of $i$-cycles occurring in this decomposition. 
Let $\NN$ be the set of natural numbers. 
Now the cycle
index of $G$ is the polynomial $Z(G;s_i:i\in \NN)$ in indeterminates 
$\{s_i\}_{i\in \NN}$ given by
\[
Z(G;s_i:i\in \NN) =
\sum_{h\in G}
\prod_{i\in\NN}
s_i^{c_i(h)}.
\]

Let $\FF_q$ be the finite field of order $q$. 
Let $C$ be a linear $[n,k]$ code over $\FF_q$, 
namely a $k$-dimensional subspace of $\FF_q^n$. 
The weight enumerator $w_C(x,y)$ of the code $C$ is the homogeneous polynomial
\[
w_C(x,y) = 
\sum_{{\bf c}\in C}
x^{n-\wt({\bf c})} y^{\wt({\bf c})} =
\sum_{i=0}^n A_i x^{n-i}y^i,
\]
where $A_i=\sharp\{{\bf c}\in C\mid \wt( {\bf c})=i\}$. 

We construct from $C$ a permutation
group $G(C)$ whose cycle index is essentially the weight enumerator of $C$.
The group we construct is the additive group of $C$. 
We let it act on the set
$\{1,\ldots,n\}\times \FF_q$ in the following way: the codeword
$(a_1,\ldots,a_n)$ acts as the permutation
\[
(i,x) \mapsto (i;x+a_i)
\]
of the set $\{1,\ldots,n\}\times \FF_q$. Then we have the following result.

\begin{thm}[{\cite[Proposition 7.2]{Cameron}}]\label{thm:Cameron}
We have
\[
w_C(x,y) = Z(G(C); s_1 \leftarrow x^{1/q}, s_p\leftarrow y^{p/q}), 
\]
where $q$ is a power of the prime number $p$.
\end{thm}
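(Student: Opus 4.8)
The plan is to compute the cycle index of $G(C)$ directly from the cycle structure of each permutation in $G(C)$ and then match the result, term by term, with the weight enumerator $w_C(x,y)$.

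First I would fix a codeword $\mathbf{a}=(a_1,\dots,a_n)\in C$ and analyze the permutation $\pi_{\mathbf a}$ it induces on $\Omega=\{1,\dots,n\}\times\FF_q$. Since $\pi_{\mathbf a}$ preserves the first coordinate, it decomposes as a disjoint union of permutations, one on each fibre $\{i\}\times\FF_q$, and on that fibre it is the translation $x\mapsto x+a_i$ of the additive group $(\FF_q,+)$. The crucial observation is that $(\FF_q,+)$ is an elementary abelian $p$-group, so $a_i$ has additive order $1$ if $a_i=0$ and order $p$ if $a_i\neq 0$; consequently translation by $a_i$ is the identity on the $q$-element fibre when $a_i=0$ (giving $q$ fixed points), while for $a_i\neq 0$ it is fixed-point-free with every orbit of size exactly $p$, hence it is a product of $q/p$ cycles of length $p$ (note $p\mid q$).

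Next I would assemble this over the $n$ fibres. Writing $w=\wt(\mathbf a)$, there are $n-w$ indices with $a_i=0$ and $w$ indices with $a_i\neq0$, so $c_1(\pi_{\mathbf a})=(n-w)q$, $c_p(\pi_{\mathbf a})=wq/p$, and $c_i(\pi_{\mathbf a})=0$ for all other $i$. Summing the associated monomials over $\mathbf a\in C$ and collecting terms by weight gives
\[
Z(G(C);s_i:i\in\NN)=\sum_{w=0}^{n} A_w\, s_1^{(n-w)q}\, s_p^{wq/p}.
\]
Finally, applying the substitution $s_1\leftarrow x^{1/q}$ and $s_p\leftarrow y^{p/q}$ turns $s_1^{(n-w)q}$ into $x^{n-w}$ and $s_p^{wq/p}$ into $y^{w}$, yielding $\sum_{w=0}^{n}A_w\,x^{n-w}y^{w}=w_C(x,y)$, which is the claim.

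I do not expect a serious obstacle; the only delicate point is the cycle-structure computation, namely the fact that a nontrivial translation of $(\FF_q,+)$ is fixed-point-free with all cycles of length exactly $p$. This is precisely what ensures that only the two indeterminates $s_1$ and $s_p$ occur in $Z(G(C))$ and that the fractional-exponent substitution produces integer exponents, so making this step clean and precise is the heart of the argument.
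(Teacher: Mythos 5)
Your proof is correct and follows essentially the same route as the paper: the paper cites this result from Cameron without reproving it, but its proof of the genus-$g$ generalization (Theorem 2.1) uses exactly your fibre-wise analysis — $q$ fixed points on the fibre when $a_i=0$, and $q/p$ cycles of length $p$ when $a_i\neq 0$ — followed by the same exponent bookkeeping under the substitution. No gaps.
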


A generalization of the weight enumerator is known as 
the complete weight enumerator of genus $g$:
\[
w_C^{(g)}(x_{{\bf a}}:{\bf a}\in \FF_q^g)=\sum_{{\bf v_1},\ldots,{\bf v_g}\in C}
\prod_{{\bf a}\in \FF_q^g}x_{{\bf a}}^{n_{{\bf a}}({\bf v_1},\ldots,{\bf v_g})}, 
\]
where $n_{{\bf a}}({\bf v_1},\ldots,{\bf v_g})$ denotes the number 
of $i$ such that ${\bf a}=(v_{1i},\ldots,v_{gi})$. 
This gives rise to a natural question: is there
a generalization of the cycle index that relates the 
complete weight enumerator $w_C^{(g)}(x_{{\bf a}}:{\bf a}\in \FF_q^g)$?
The aim of the present paper is to provide a candidate generalization that answers this. 
We now present the concept of the complete cycle index. 

\begin{df}
Let $G$ be a permutation group on a set $\Omega$, 
where $|\Omega| = n$. 
For each element
$h \in G$, we can decompose the permutation $h$ into a product 
of disjoint cycles; let
$c(h,i)$ be the number of $i$-cycles occurring by the action of $h$. 
Now the complete cycle
index of $G$ is the polynomial $Z(G;s(h,i): h\in G,i\in \NN)$ in indeterminates 
$\{s(h,i)\mid h\in G,i\in \NN\}$ given by
\[
Z(G;s(h,i): h\in G,i\in \NN) =
\sum_{h\in G}
\prod_{i\in \NN}s(h,i)^{c(h,i)}.
\]
\end{df}

\begin{rem}
Note that if we let $s_i=s({h,i})$, then we obtain the cycle index: 
\[
Z(G;s_i: h\in G,i\in \NN\})=\sum_{h\in G}\prod_{i\in \NN}s_{i}^{n(h,i)}. 
\]
\end{rem}

The main result of this paper, Theorem \ref{thm:main}, 
uses the concept of the complete cycle index. 
We also give a generalization of Theorem \ref{thm:Cameron}. 

This paper is organized as follows. 
In Section $2$, 
we give the concept of 
the higher cycle index and the complete cycle index and 
also give the main result of this paper and its proof. 
In Section $3$, 
we give a $\ZZ_k$-code analog of the main result. 

\section{Higher cycle index and complete cycle index}

\subsection{Definitions and examples}
In this section, 
we give the concept of 
the higher cycle index and the complete cycle index, 
and provide some examples. 

\begin{df}\label{df:higheraction}
Let $C$ be a linear $[n,k]$ code over $\FF_q$. 
We construct from $C^g:=\underbrace{C\times \cdots\times C}_{g}$ a permutation
group $G(C^{g})$.
The group we construct is the additive group of $C^{g}$. 
We denote an element of $C^g$ by 
\[
({\bf c_1},\ldots,{\bf c_n}):=
\begin{pmatrix}
a_{11}&\ldots&a_{1n}\\
a_{21}&\ldots&a_{2n}\\
\vdots&\ldots&\vdots\\
a_{g1}&\ldots&a_{gn}
\end{pmatrix},
\]
where 
${\bf c_i}:={}^t(a_{1i},\ldots,a_{gi})\in C$. 
We let it act on the set
$\{1,\ldots,n\}\times \FF_q^{g}$ in the following way: 
$({\bf c_1},\ldots,{\bf c_n})$
acts as the permutation
\[
\left(i,
\begin{pmatrix}
x_{1}\\
x_{2}\\
\vdots\\
x_{g}
\end{pmatrix}
\right)\mapsto \left(i,
\begin{pmatrix}
x_{1}+a_{1i}\\
x_{2}+a_{2i}\\
\vdots\\
x_{g}+a_{gi}
\end{pmatrix}
\right) 
\]
of the set $\{1,\ldots,n\}\times \FF_q^g$. We call the 
cycle index 
\[
Z(G(C^{g}),s_i:i\in \NN) 
\]
the higher cycle index of genus $g$ for code $C$. 
We call the complete 
cycle index 
\[
Z(G(C^{g}),s(h,i):h\in C^g,i\in \NN) 
\]
the complete cycle index of genus $g$ for code $C$. 
\end{df}

\begin{rem}
Note that let $s_i=s({h,i})$. 
Then we obtain the higher cycle index: 
\[
Z(G(C^g);s_i:h\in G(C^g))=\sum_{g\in G(C^g)}\prod_{i\in \NN}s_{i}^{n(h,i)}. 
\]
\end{rem}

We now give some examples. 

\begin{ex}
Let $C=\FF_2^2$．Then the higher cycle index, the 
complete cycle index, and the complete weight enumerator of genus $2$ are as follows:
\begin{align*}
\bullet\ Z&(G(C^2);s_1,s_2)=s_1^8+6s_1^4s_2^2+9s_2^4, \\
\bullet\ Z&(G(C^2);s(h,i):h\in C^2)\\
&=
s(
\begin{bmatrix}
0&0\\
0&0
\end{bmatrix},1
)^4
s(
\begin{bmatrix}
0&0\\
0&0
\end{bmatrix},1
)^4+
s(
\begin{bmatrix}
0&0\\
0&1
\end{bmatrix},1
)^4
s(
\begin{bmatrix}
0&0\\
0&1
\end{bmatrix},2
)^2\\
&+
s(
\begin{bmatrix}
0&0\\
1&0
\end{bmatrix},2
)^2
s(
\begin{bmatrix}
0&0\\
1&0
\end{bmatrix},1
)^4
+
s(
\begin{bmatrix}
0&0\\
1&1
\end{bmatrix},2
)^2
s(
\begin{bmatrix}
0&0\\
1&1
\end{bmatrix},2
)^2\\
&+
s(
\begin{bmatrix}
0&1\\
0&0
\end{bmatrix},1
)^4
s(
\begin{bmatrix}
0&1\\
0&0
\end{bmatrix},2
)^2
+
s(
\begin{bmatrix}
0&1\\
0&1
\end{bmatrix},1
)^4
s(
\begin{bmatrix}
0&1\\
0&1
\end{bmatrix},2
)^2\\
&+
s(
\begin{bmatrix}
0&1\\
1&0
\end{bmatrix},2
)^2
s(
\begin{bmatrix}
0&1\\
1&0
\end{bmatrix},2
)^2
+
s(
\begin{bmatrix}
0&1\\
1&1
\end{bmatrix},2
)^2
s(
\begin{bmatrix}
0&1\\
1&1
\end{bmatrix},2
)^2\\
&+
s(
\begin{bmatrix}
1&0\\
0&0
\end{bmatrix},2
)^2
s(
\begin{bmatrix}
1&0\\
0&0
\end{bmatrix},1
)^4
+
s(
\begin{bmatrix}
1&0\\
0&1
\end{bmatrix},2
)^2
s(
\begin{bmatrix}
1&0\\
0&1
\end{bmatrix},2
)^2\\
&+
s(
\begin{bmatrix}
1&0\\
1&0
\end{bmatrix},2
)^2
s(
\begin{bmatrix}
1&0\\
1&0
\end{bmatrix},1
)^4
+
s(
\begin{bmatrix}
1&0\\
1&1
\end{bmatrix},2
)^2
s(
\begin{bmatrix}
1&0\\
1&1
\end{bmatrix},2
)^2\\
&+
s(
\begin{bmatrix}
1&1\\
0&0
\end{bmatrix},2
)^2
s(
\begin{bmatrix}
1&1\\
0&0
\end{bmatrix},2
)^2
+
s(
\begin{bmatrix}
1&1\\
0&1
\end{bmatrix},2
)^2
s(
\begin{bmatrix}
1&1\\
0&1
\end{bmatrix},2
)^2\\
&+
s(
\begin{bmatrix}
1&1\\
1&0
\end{bmatrix},2
)^2
s(
\begin{bmatrix}
1&1\\
1&0
\end{bmatrix},2
)^2
+
s(
\begin{bmatrix}
1&1\\
1&1
\end{bmatrix},2
)^2
s(
\begin{bmatrix}
1&1\\
1&1
\end{bmatrix},2
)^2, \\
\bullet\ w&_C^{(2)}(x_{00},\ldots,x_{11})=
\sum x_{ij}^2
+
2\sum x_{ij}x_{ki}. 
\end{align*}
\end{ex}








\subsection{Main results}

In this section, we present the main result of this paper. 
The following theorem is a generalization of Theorem \ref{thm:Cameron}.
 
\begin{thm}\label{thm:main}
Let $C$ be a code over $\FF_q$ of length $n$, 
where $q$ is a prime power of $p$.
Let $w_C^{(g)}(x_{\bf a}:{\bf a}\in \FF_q^g)$ be the 
complete weight enumerator of genus $g$ and 
$Z(G(C^g);s(h,i):h\in C^g,i\in\NN)$ be the 
complete cycle index of genus $g$. 

Let $T$ be a map defined as follows: 
for each $h=({\bf a_1},\ldots,{\bf a_n})\in C^g$ and $i\in \{1,\ldots,n\}$, 
if 
${\bf a_i}={\bf 0}$, then 
\[
s(h,1)\mapsto x_{{\bf a_i}}^{1/q^g};
\]
if 
${\bf a_i}\neq {\bf 0}$, then 
\[
s(h,p)\mapsto x_{{\bf a_i}}^{p/q^g}. 
\]
Then we have 
\[
w_C^{(g)}(x_a:a\in \FF_q^g)
=
T(Z(G(C^g);s(h,i):h\in C^g,i\in \NN)). 
\]
\end{thm}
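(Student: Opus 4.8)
The plan is to analyze the cycle structure of each group element $h = ({\bf a_1},\ldots,{\bf a_n}) \in C^g$ acting on $\{1,\ldots,n\}\times \FF_q^g$, and to show that the map $T$ converts the resulting monomial in the complete cycle index into exactly the monomial contributed by $h$ to $w_C^{(g)}$. The key observation is that the action fixes the first coordinate $i$, so the permutation $h$ decomposes as a disjoint union over $i \in \{1,\ldots,n\}$ of its restriction to the fiber $\{i\}\times\FF_q^g$. On that fiber, $h$ acts by translation $x \mapsto x + {\bf a_i}$ in the elementary abelian group $\FF_q^g \cong (\ZZ/p\ZZ)^{gm}$ where $q = p^m$. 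First I would record the standard fact about translations: if ${\bf a_i} = {\bf 0}$, the restriction is the identity on $q^g$ points, giving $q^g$ fixed points (i.e. $1$-cycles); if ${\bf a_i}\neq{\bf 0}$, then ${\bf a_i}$ has additive order exactly $p$, so translation by ${\bf a_i}$ is a product of $q^g/p$ disjoint $p$-cycles and has no other cycles.

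Next I would assemble the contribution of $h$ to the complete cycle index. Writing the contribution fiber by fiber, the monomial $\prod_{i\in\NN} s(h,i)^{c(h,i)}$ factors as $\prod_{i=1}^n \mu_i(h)$, where $\mu_i(h) = s(h,1)^{q^g}$ when ${\bf a_i} = {\bf 0}$ and $\mu_i(h) = s(h,p)^{q^g/p}$ when ${\bf a_i}\neq{\bf 0}$. (One should be slightly careful that distinct indices $i$ with the same value of ${\bf a_i}$ both contribute to the exponent of the same variable $s(h,1)$ or $s(h,p)$; this is harmless since we are just collecting powers, and it is exactly what makes the subsequent substitution produce the correct multiplicities $n_{\bf a}$.) Now apply $T$: for each $i$, if ${\bf a_i} = {\bf 0}$ then $s(h,1)^{q^g} \mapsto (x_{{\bf a_i}}^{1/q^g})^{q^g} = x_{\bf 0}$, and if ${\bf a_i}\neq{\bf 0}$ then $s(h,p)^{q^g/p} \mapsto (x_{{\bf a_i}}^{p/q^g})^{q^g/p} = x_{{\bf a_i}}$. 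In both cases the exponents cancel cleanly and the fiber over $i$ contributes the single factor $x_{{\bf a_i}}$.

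Therefore $T$ applied to the monomial of $h$ yields $\prod_{i=1}^n x_{{\bf a_i}}$. Finally I would identify $h = ({\bf a_1},\ldots,{\bf a_n}) \in C^g$ with the tuple $({\bf v_1},\ldots,{\bf v_g})$ of rows, so that ${\bf a_i} = (v_{1i},\ldots,v_{gi})$ is the $i$-th column; then $\prod_{i=1}^n x_{{\bf a_i}} = \prod_{{\bf a}\in\FF_q^g} x_{\bf a}^{n_{\bf a}({\bf v_1},\ldots,{\bf v_g})}$ by definition of $n_{\bf a}$. Summing over all $h\in C^g$ — equivalently over all $({\bf v_1},\ldots,{\bf v_g})\in C^g$ — the left-hand side becomes $w_C^{(g)}(x_{\bf a}:{\bf a}\in\FF_q^g)$ by definition, which completes the proof.

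\textbf{Main obstacle.} The mathematical content is entirely in the cycle-structure lemma for translations on an elementary abelian $p$-group, which is routine. The only real subtlety is bookkeeping: $T$ is defined as a substitution depending on $h$ (the variables $s(h,i)$ are indexed by $h$), so one must make sure the substitution is applied consistently within each monomial before summing, and that the case division ${\bf a_i} = {\bf 0}$ versus ${\bf a_i}\neq{\bf 0}$ matches the $1$-cycle versus $p$-cycle dichotomy. Once this correspondence is set up carefully, the exponent arithmetic $q^g \cdot (1/q^g) = 1$ and $(q^g/p)\cdot(p/q^g) = 1$ does all the remaining work, and specializing to $g = 1$ recovers Theorem \ref{thm:Cameron}.
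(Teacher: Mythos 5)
Your proposal is correct and follows essentially the same route as the paper: decompose the permutation fiber-by-fiber over $i\in\{1,\ldots,n\}$, observe that translation by ${\bf a_i}$ on $\FF_q^g$ gives $q^g$ fixed points when ${\bf a_i}={\bf 0}$ and $q^g/p$ cycles of length $p$ otherwise, and check that $T$ collapses the resulting monomial to $\prod_i x_{{\bf a_i}}$. Your bookkeeping is in fact slightly more careful than the paper's (which writes ``$q/p$ cycles of length $p$'' where $q^g/p$ is meant, as its own exponent formula confirms), but the argument is the same.
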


\begin{proof}
Let $h=({\bf a_1},\ldots,{\bf a_n})\in C^g$ and 
\[
\wt^{(g)}(h)=\sharp\{i\mid {\bf a_i}\neq {\bf 0}\}. 
\]
If ${\bf a_i}= 0$, then the $q^g$ points 
of the form $(i,{\bf x})\in \{1,\ldots,n\}\times \FF_q^g$ 
are all fixed by this element; 
if ${\bf a_i}\neq 0$, they are
permuted in $q/p$ cycles of length $p$. 
Thus, $h=({\bf a_1},\ldots,{\bf a_n})\in C^g$ contributes 
\[
s(h,1)^{q^g(n-\wt^{(g)}(h))}s(h,p)^{q^g/p\wt^{(g)}(h)}
\]
to the sum in the formula for the complete cycle index, 
and 
\[
x_{\bf a_1}x_{\bf a_2}\cdots x_{\bf a_n}
\]
to the sum in the formula for the complete weight enumerator. 
The result follows. 
\end{proof}

To explain a relation between a 
higher cycle index and a 
higher weight enumerator, 
we review the concept behind it. 

\begin{df}
Let $C$ be a code over $\FF_q$ of length $n$. We have
\[
\|D\|=|{\rm supp}(D)|, 
\]
where 
$|{\rm supp}(D)|=\{i\mid \exists v\in D,v_i\neq 0\}$. In addition, 
\begin{align*}
\left\{
\begin{array}{l}
d_r=d_r(C)=\min\{\|D\|\mid D\leq C,\dim (D)=r\}, \\
A_i^r=A_i^r(C)=|\{D\leq C\mid \dim (D)=r,\|D\|=i\}|. 
\end{array}
\right. 
\end{align*}
Then the higher-weight enumerator is defined as follows: 
\begin{align*}
w_C^r(x,y):&=\sum_{D\leq C,\dim(D)=r}x^{n-\|D\|}y^{\|D\|}\\
&=\sum_{i=0}^{n}A_i^r(C)x^{n-i}y^i. 
\end{align*}
\end{df}

\begin{thm}[\cite{DGO,DGO2}]\label{thm:DGO}
Let $C$ be a code over $\FF_q$ of length $n$. Then 
\[
w_C^{(g)}(x_{{\bf 0}}=x,x_{{\bf a}}=y\ ({\bf a}\neq 0))
=\sum_{r=0}^{g}[g]_r w_C^r(x,y), 
\]
where 
\[
[g]_r=
\left\{
\begin{array}{ll}
1&{\rm if\ } r=0\\
(q^g-1)
(q^g-q)
\cdots
(q^g-q^{r-1})&{\rm otherwise.}
\end{array} 
\right.
\]
\end{thm}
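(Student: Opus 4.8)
\textbf{Proof proposal for Theorem \ref{thm:DGO}.}

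The plan is to start from the combinatorial description of the genus-$g$ complete weight enumerator and specialize the variables as prescribed: set $x_{\bf 0}=x$ and $x_{\bf a}=y$ for all ${\bf a}\neq {\bf 0}$. After this substitution, the monomial $\prod_{{\bf a}\in\FF_q^g}x_{\bf a}^{n_{\bf a}({\bf v_1},\ldots,{\bf v_g})}$ attached to a $g$-tuple $({\bf v_1},\ldots,{\bf v_g})\in C^g$ collapses to $x^{n-\|D\|}y^{\|D\|}$, where $D=\la {\bf v_1},\ldots,{\bf v_g}\ra$ is the subcode spanned by the tuple: indeed, a coordinate $i$ contributes a factor $x$ precisely when $v_{1i}=\cdots=v_{gi}=0$, i.e.\ when $i\notin\supp(D)$, and a factor $y$ otherwise, so the exponent of $y$ is exactly $\|D\|=|\supp(D)|$. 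Hence the specialized left-hand side equals $\sum_{({\bf v_1},\ldots,{\bf v_g})\in C^g} x^{n-\|D\|}y^{\|D\|}$, where $D$ depends on the tuple.

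Next I would reorganize this sum by grouping the $g$-tuples according to the subcode they span. For a fixed subspace $D\le C$ with $\dim D=r$, the number of $g$-tuples $({\bf v_1},\ldots,{\bf v_g})\in D^g$ that actually span $D$ is the number of surjective linear maps $\FF_q^g\to D$ (equivalently, the number of $g\times r$ matrices over $\FF_q$ of rank $r$, reading the $v_j$ as the images of a basis and using coordinates on $D$). A standard count gives that this number equals $(q^g-1)(q^g-q)\cdots(q^g-q^{r-1})$, which is exactly the quantity $[g]_r$ defined in the statement (and equals $1$ when $r=0$, matching the single tuple of all-zero vectors spanning the zero subcode). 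Therefore
\[
\sum_{({\bf v_1},\ldots,{\bf v_g})\in C^g} x^{n-\|D\|}y^{\|D\|}
=\sum_{r=0}^{g}\ \sum_{\substack{D\le C\\ \dim D=r}} [g]_r\, x^{n-\|D\|}y^{\|D\|}
=\sum_{r=0}^{g}[g]_r\, w_C^r(x,y),
\]
where the last equality is just the definition of the higher-weight enumerator $w_C^r$. This chain of identities is the assertion.

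The only genuinely substantive point is the rank count, i.e.\ verifying that the number of spanning $g$-tuples of a fixed $r$-dimensional space is $\prod_{j=0}^{r-1}(q^g-q^{j})$; everything else is bookkeeping. I would establish this by the usual inductive argument: choosing the images of the standard basis of $\FF_q^g$ one at a time so that each new image lies outside the span of the previous ones is not directly what we want (that would count ordered bases after fixing which $r$ of the $g$ generators are ``independent''), so instead it is cleanest to count $g\times r$ matrices of full column rank $r$ over $\FF_q$ by building the $r$ columns successively: the $j$-th column must avoid the $q^{j-1}$-element span of the previous columns inside $\FF_q^g$, giving $q^g-q^{j-1}$ choices, for a total of $\prod_{j=1}^{r}(q^g-q^{j-1})$. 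One then notes that such matrices are in bijection with pairs (a surjection $\FF_q^g\twoheadrightarrow D$) after fixing a coordinate system on $D$, and that the resulting count is independent of that choice, so it indeed counts the spanning $g$-tuples of $D$. Assembling these pieces yields the theorem; no further obstacle is expected.
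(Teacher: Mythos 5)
Your argument is correct: the specialization collapses each monomial to $x^{n-\|D\|}y^{\|D\|}$ with $D=\la {\bf v_1},\ldots,{\bf v_g}\ra$, and regrouping the $g$-tuples by their span, with the standard count $[g]_r=\prod_{j=0}^{r-1}(q^g-q^j)$ of spanning $g$-tuples of a fixed $r$-dimensional subspace, gives the claimed identity. Note that the paper itself supplies no proof of this statement --- it is quoted from the cited references --- so there is nothing to compare against; your proof is the standard one and fills that gap correctly.
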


The following theorem gives a relation 
between the higher cycle index and the higher weight enumerator.
 
\begin{thm}
Let $C$ be a code over $\FF_q$ of length $n$, 
where $q$ is a prime power of $p$.
Then 
\[
Z(G(C^g);s_i:i\in \NN)=\sum_{r=0}^{g}[g]_r w_C^r(s_1^{q^g},s_p^{q^g/p}). 
\]
\end{thm}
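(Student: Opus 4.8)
The plan is to combine the proof technique of Theorem~\ref{thm:main} with the combinatorial identity of Theorem~\ref{thm:DGO}. The key observation is that the higher cycle index is the specialization of the complete cycle index obtained by setting $s(h,i)=s_i$ for all $h$, so I would start from the formula proved inside Theorem~\ref{thm:main}: each element $h=({\bf a_1},\ldots,{\bf a_n})\in C^g$ contributes $s_1^{q^g(n-\wt^{(g)}(h))}s_p^{q^g/p\cdot\wt^{(g)}(h)}$ to $Z(G(C^g);s_i:i\in\NN)$, where $\wt^{(g)}(h)=\sharp\{i\mid {\bf a_i}\neq{\bf 0}\}$. Hence
\[
Z(G(C^g);s_i:i\in\NN)=\sum_{h\in C^g}s_1^{q^g(n-\wt^{(g)}(h))}s_p^{(q^g/p)\wt^{(g)}(h)}.
\]

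Next I would identify $\wt^{(g)}(h)$ with a notion of support for subspaces. Writing $h=({\bf v_1},\ldots,{\bf v_g})$ with ${\bf v_j}\in C$ (the rows of the matrix in Definition~\ref{df:higheraction}), the column ${\bf a_i}$ is zero precisely when the $i$-th coordinate of every ${\bf v_j}$ vanishes, i.e. when $i\notin{\rm supp}(D)$ for $D=\la {\bf v_1},\ldots,{\bf v_g}\ra$. Therefore $\wt^{(g)}(h)=\|D\|$ where $D$ is the subspace of $C$ spanned by the $g$ chosen codewords. Grouping the sum over $h\in C^g$ according to the subspace $D\le C$ that the tuple spans, and recalling that the number of $g$-tuples of vectors in $C$ spanning a fixed $r$-dimensional subspace $D$ is exactly $[g]_r$ (the number of surjective linear maps $\FF_q^g\to D$, equivalently $(q^g-1)(q^g-q)\cdots(q^g-q^{r-1})$ for $r\ge1$ and $1$ for $r=0$), I get
\[
Z(G(C^g);s_i:i\in\NN)=\sum_{r=0}^{g}\sum_{\substack{D\le C\\ \dim D=r}}[g]_r\,s_1^{q^g(n-\|D\|)}s_p^{(q^g/p)\|D\|}.
\]
The inner double sum over $D$ of fixed dimension $r$ is, by definition of the higher-weight enumerator, exactly $w_C^r(s_1^{q^g},s_p^{q^g/p})$, which yields the claimed identity.

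The only genuinely delicate point is the counting fact that a fixed $r$-dimensional subspace $D$ is spanned by precisely $[g]_r$ ordered $g$-tuples of its elements; this is the same $[g]_r$ appearing in Theorem~\ref{thm:DGO}, so the cleanest route is simply to invoke that theorem's proof or to note that specializing the complete weight enumerator as in Theorem~\ref{thm:DGO} and then applying Theorem~\ref{thm:main} with $x_{\bf 0}=s_1^{q^g}$ and $x_{\bf a}=s_p^{q^g/p}$ for ${\bf a}\neq{\bf 0}$ gives the result immediately, without re-deriving the combinatorics. Indeed, under that specialization $x_{{\bf a_1}}\cdots x_{{\bf a_n}}=s_1^{q^g(n-\wt^{(g)}(h))}s_p^{(q^g/p)\wt^{(g)}(h)}$, matching the cycle-index contribution term by term, so
\[
Z(G(C^g);s_i:i\in\NN)=w_C^{(g)}(x_{\bf 0}=s_1^{q^g},x_{\bf a}=s_p^{q^g/p})=\sum_{r=0}^{g}[g]_r w_C^r(s_1^{q^g},s_p^{q^g/p}),
\]
where the last equality is Theorem~\ref{thm:DGO}. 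I would present this short derivation as the main proof and relegate the direct tuple-counting argument to a remark.
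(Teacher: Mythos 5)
Your proposal is correct and its main route is exactly the paper's proof: one shows term by term that $Z(G(C^g);s_i:i\in\NN)$ equals $w_C^{(g)}$ specialized at $x_{\bf 0}=s_1^{q^g}$ and $x_{\bf a}=s_p^{q^g/p}$ for ${\bf a}\neq{\bf 0}$, and then invokes Theorem~\ref{thm:DGO}. The direct tuple-counting argument you sketch (grouping $h\in C^g$ by the subspace it spans and counting $[g]_r$ surjections) is a sound self-contained alternative that would make the proof independent of Theorem~\ref{thm:DGO}, but as you present it, it is only a supplementary remark.
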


\begin{proof}
We claim that 
\[
Z(G(C^g);s_i:i\in \NN)
=w_C^{(g)}(x_{{\bf 0}}=s_1^{q^g},x_{{\bf a}}=s_p^{q^g/p}\ ({\bf a}\neq {\bf 0})). 
\]
Let $h=({\bf a_1},\ldots,{\bf a_n})\in C^g$ and 
\[
\wt^{(g)}(h)=\sharp\{i\mid {\bf a_i}\neq {\bf 0}\}. 
\]
If ${\bf a_i}= 0$, then the $q^g$ points 
of the form $(i,{\bf x})\in \{1,\ldots,n\}\times \FF_q^g$ 
are all fixed by this element; 
if ${\bf a_i}\neq 0$, they are
permuted in $q/p$ cycles of length $p$. 
Thus, $h=({\bf a_1},\ldots,{\bf a_n})\in C^g$ contributes 
\[
s_1^{q^g(n-\wt^{(g)}(h))}s_p^{q^g/p\wt^{(g)}(h)}
\]
to the sum in the formula for the complete cycle index, 
and 
\[
x_{\bf a_1}x_{\bf a_2}\cdots x_{\bf a_n}
\]
to the sum in the formula for the complete weight enumerator. 

The result follows by Theorem \ref{thm:DGO}. 
\end{proof}

\section{$\ZZ_{k}$-code analogue of the main results}
In \cite{BDHO}, 
the authors introduced the concept of 
$\ZZ_{k}$-codes. 
In this section, 
we give a $\ZZ_k$-code analogue of Theorem \ref{thm:main}. 

Let $\ZZ_{k}$ be the ring 
of integers modulo $k$, where $k$ 
is a positive integer. 
In this paper, we always assume that $k\geq 2$ and 
we take the set $\ZZ_{k}$ to be 
$\{0,1,\ldots,k-1\}$.
A $\ZZ_{k}$-code $C$ of length $n$
is a $\ZZ_{k}$-submodule of $\ZZ_{k}^n$.

The complete weight enumerator of genus $g$ is 
\[
w_C^{(g)}(x_{{\bf a}}:{\bf a}\in \ZZ_k^g)=\sum_{{\bf v_1},\ldots,{\bf v_g}\in C}
\prod_{{\bf a}\in \ZZ_k^g}x_{{\bf a}}^{n_{{\bf a}}({\bf v_1},\ldots,{\bf v_g})}, 
\]
where $n_{{\bf a}}({\bf v_1},\ldots,{\bf v_g})$ denotes the number 
of $i$ such that ${\bf a}=(v_{1i},\ldots,v_{gi})$. 

The following theorem is a $\ZZ_k$-code analogue of Theorem \ref{thm:main}. 

\begin{thm}\label{thm:ZZ}
Let $C$ be a code over $\ZZ_k$ of length $n$.
Let $w_C^{(g)}(x_{\bf a}:{\bf a}\in \ZZ_k^g)$ be the 
complete weight enumerator of genus $g$ and let 
$Z(G(C^g);s(h,i):h\in C^g,i\in\NN)$ be the 
complete cycle index of genus $g$. 

Let $T$ be a map defined as follows: 
for each $h=({\bf a_1},\ldots,{\bf a_n})\in C^g$ and $i\in \{1,\ldots,n\}$, 
\begin{align*}
&s(h,1)\mapsto x_{{\bf a_i}}^{1/k^g} for\ {\bf a_i}={\bf 0},\\
&s(h,k/\gcd(a_{i1},\ldots,a_{ig},k))\mapsto x_{{\bf a_i}}^{(k/\gcd(a_{i1},\ldots,a_{ig},k))/k^g}\ for\ {\bf a_i}={\bf 0}. 
\end{align*}
Then 
\[
w_C^{(g)}(x_a:a\in \ZZ_k^g)
=
T(Z(G(C^g);s(h,i):h\in C^g,i\in \NN)). 
\]
\end{thm}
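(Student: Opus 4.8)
The plan is to mirror the proof of Theorem~\ref{thm:main}, replacing the field-theoretic cycle-structure analysis by the corresponding statement over $\ZZ_k$. First I would fix an element $h=({\bf a_1},\ldots,{\bf a_n})\in C^g$ and analyze, column by column, how $h$ acts on the blocks $\{i\}\times\ZZ_k^g$ of the set $\{1,\ldots,n\}\times\ZZ_k^g$. Since $h$ translates the second coordinate of $(i,{\bf x})$ by the vector ${\bf a_i}=(a_{i1},\ldots,a_{ig})$, the permutation it induces on $\{i\}\times\ZZ_k^g$ is just the translation-by-${\bf a_i}$ map on $\ZZ_k^g$; its cycles are the cosets of the cyclic subgroup $\langle{\bf a_i}\rangle\le\ZZ_k^g$ generated by ${\bf a_i}$, and each such cycle has length equal to the additive order of ${\bf a_i}$ in $\ZZ_k^g$. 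The key elementary fact I would invoke is that this order equals $k/\gcd(a_{i1},\ldots,a_{ig},k)$: each coordinate $a_{ij}$ has order $k/\gcd(a_{ij},k)$, and the order of the tuple is the lcm of these, which is $k/\gcd(a_{i1},\ldots,a_{ig},k)$. Write $\ell_i:=k/\gcd(a_{i1},\ldots,a_{ig},k)$; then the block $\{i\}\times\ZZ_k^g$ splits into $k^g/\ell_i$ cycles each of length $\ell_i$, and in particular when ${\bf a_i}={\bf 0}$ we have $\ell_i=1$ and all $k^g$ points are fixed.

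Next I would assemble the contribution of $h$ to the complete cycle index. Because the blocks for distinct $i$ are disjoint and $h$ respects them, the full cycle decomposition of $h$ is the disjoint union of the per-block decompositions, so $h$ contributes
\[
\prod_{i=1}^{n} s(h,\ell_i)^{\,k^g/\ell_i}
\]
to $Z(G(C^g);s(h,i):h\in C^g,i\in\NN)$, where for the indices $i$ with ${\bf a_i}={\bf 0}$ the factor is $s(h,1)^{k^g}$. Applying the substitution $T$ to this monomial: each factor $s(h,\ell_i)^{k^g/\ell_i}$ with ${\bf a_i}\neq{\bf 0}$ becomes $\bigl(x_{{\bf a_i}}^{\ell_i/k^g}\bigr)^{k^g/\ell_i}=x_{{\bf a_i}}$, and each factor $s(h,1)^{k^g}$ with ${\bf a_i}={\bf 0}$ becomes $\bigl(x_{{\bf 0}}^{1/k^g}\bigr)^{k^g}=x_{{\bf 0}}=x_{{\bf a_i}}$. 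Hence $T$ sends the contribution of $h$ precisely to $x_{{\bf a_1}}x_{{\bf a_2}}\cdots x_{{\bf a_n}}$. Summing over all $h=({\bf v_1},\ldots,{\bf v_g})\in C^g$, where ${\bf v_j}=(v_{j1},\ldots,v_{jn})$ and the $i$-th column is ${\bf a_i}={}^t(v_{1i},\ldots,v_{gi})$, the right-hand side becomes $\sum_{{\bf v_1},\ldots,{\bf v_g}\in C}\prod_{i=1}^{n} x_{(v_{1i},\ldots,v_{gi})}$, which is exactly $w_C^{(g)}(x_{\bf a}:{\bf a}\in\ZZ_k^g)$ after collecting equal factors into the exponents $n_{\bf a}({\bf v_1},\ldots,{\bf v_g})$. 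This completes the argument.

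The only genuine subtlety — the ``hard part'' such as it is — is the order computation $\ell_i=k/\gcd(a_{i1},\ldots,a_{ig},k)$ and the resulting cycle count $k^g/\ell_i$, since over $\ZZ_k$ (unlike over a field $\FF_q$) the cycle length is no longer a single prime $p$ but depends on the gcd of the column entries with $k$; one should note $\ell_i\mid k^g$ so that $k^g/\ell_i$ is indeed a positive integer and the exponents $\ell_i/k^g$ appearing in $T$ are well-defined rational powers. (I would also flag the evident typo in the statement of Theorem~\ref{thm:ZZ}: the second displayed substitution should read ``for ${\bf a_i}\neq{\bf 0}$,'' and the index of the substituted variable is $s(h,k/\gcd(a_{i1},\ldots,a_{ig},k))$, consistent with the case analysis above.) Everything else is the same disjoint-blocks bookkeeping used in the proof of Theorem~\ref{thm:main}, now carried out with the refined cycle length $\ell_i$ in place of $p$.
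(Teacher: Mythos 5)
Your proof is correct and follows essentially the same route as the paper's: a column-by-column analysis of the cycle structure of the translation action on each block $\{i\}\times\ZZ_k^g$, followed by the substitution $T$, with the paper simply deferring all the bookkeeping to ``the argument of Theorem~\ref{thm:main}.'' You in fact supply two points the paper elides or garbles --- the verification that the additive order of ${\bf a_i}$ equals $k/\gcd(a_{i1},\ldots,a_{ig},k)$ via the lcm computation, and the correct cycle count $k^g/\ell_i$ (the paper's proof misprints this as $(k/\gcd(a_{i1},\ldots,a_{ig},k))/k^g$) --- and the typo you flag in the statement of $T$ (the second substitution should be for ${\bf a_i}\neq{\bf 0}$) is indeed present in the paper.
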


\begin{proof}
Let $h=({\bf a_1},\ldots,{\bf a_n})\in C^g$ and 
\[
\wt^{(g)}(h)=\sharp\{i\mid {\bf a_i}\neq {\bf 0}\}. 
\]
If ${\bf a_i}= 0$, then the $k^g$ points 
of the form $(i,{\bf x})\in \{1,\ldots,n\}\times \ZZ_k^g$ 
are all fixed by this element; 
if ${\bf a_i}\neq 0$, they are
permuted in $(k/\gcd(a_{i1},\ldots,a_{ig},k))/k^g$ cycles of length $k/\gcd(a_{i1},\ldots,a_{ig},k)$. 
Then the result follows from the argument of Theorem \ref{thm:main}. 
\end{proof}

\section*{Acknowledgments}
The authors would also like to thank the anonymous
reviewers for their beneficial comments on an earlier version of the manuscript.





\end{document}